\numberwithin{equation}{section}
\theoremstyle{definition}
\newtheorem{theorem}[equation]{Theorem}
\newtheorem{corollary}[equation]{Corollary}
\newtheorem{remark}[equation]{Remark}
\newcommand{\Id}{\operatorname{Id}}
\newcommand{\Ad}{\operatorname{Ad}}
\newcommand{\HH}{\mathsf{HH}}
\newcommand{\HP}{\mathsf{HP}}
\newcommand{\RR}{\mathbb{R}}
\newcommand{\ZZ}{\mathbb{Z}}
\newcommand{\bbO}{\mathbb{O}}
\newcommand{\iso}{{\;\stackrel{_\sim}{\to}\;}}
\newcommand{\CC}{\mathbb{C}}
\newcommand{\cO}{\mathcal{O}}
\newcommand{\cN}{\mathcal{N}}
\newcommand{\cW}{\mathcal{W}}
\newcommand{\bb}{\mathfrak{b}}
\newcommand{\g}{\mathfrak{g}}
\newcommand{\mm}{\mathfrak{m}}
\newcommand{\gr}{\operatorname{\mathsf{gr}}}
\newcommand{\Aut}{\operatorname{Aut}}
\newcommand{\Stab}{\operatorname{Stab}}
\newcommand{\onto}{\twoheadrightarrow}
\newcommand{\into}{\hookrightarrow}
\newcommand{\Sym}{\operatorname{Sym}}
\newcommand{\Spec}{\operatorname{Spec}}
\newcommand{\Irrep}{\operatorname{Irrep}}
\begin{document}
\title{Traces on finite $\cW$-algebras}
\author{Pavel Etingof and Travis Schedler}
\date{April 22, 2010}
\begin{abstract}
  We compute the space of Poisson traces on a classical $\cW$-algebra,
  i.e., linear functionals invariant under Hamiltonian
  derivations. Modulo any central character, this space identifies
  with the top cohomology of the corresponding Springer fiber. As a
  consequence, we deduce that the zeroth Hochschild homology of the
  corresponding quantum $\cW$-algebra modulo a central character
  identifies with the top cohomology of the corresponding Springer
  fiber. This implies that the number of irreducible
  finite-dimensional representations of this algebra is bounded by the
  dimension of this top cohomology, which was established earlier by
  C. Dodd using reduction to positive characteristic. Finally, we
  prove that the entire cohomology of the Springer fiber identifies
  with the so-called Poisson-de Rham homology (defined previously by
  the authors) of the centrally reduced classical $\cW$-algebra.
\end{abstract}
\maketitle
\section{Introduction}

The main goal of this note is to compute the zeroth Poisson homology
of classical finite $\cW$-algebras, and the zeroth Hochschild homology
of their quantizations.  Modulo any central character, both spaces
turn out to be isomorphic to the top cohomology of the corresponding
Springer fiber. The proof is based on the presentation of the Springer
$D$-module on the nilpotent cone by generators and relations (due to
Hotta and Kashiwara), and earlier results of the authors on the
characterization of zeroth Poisson homology in terms of
$D$-modules. This implies an upper bound on the number of irreducible
finite-dimensional representations of a quantum $\cW$-algebra with a
fixed central character, which was previously established by C. Dodd
using positive characteristic arguments. We also show that the
Poisson-de Rham homology groups of a centrally reduced classical
$\cW$-algebra (defined earlier by the authors) are isomorphic to the
cohomology groups of the Springer fiber in complementary dimension.

\subsection{Definition of classical $\cW$-algebras} 
We first recall the definition of classical $\cW$-algebras (see, e.g.,
\cite{GaGiqss, Losfwa} and the references therein). Let $\g$ be a
finite-dimensional simple Lie algebra over $\CC$ with the
nondegenerate invariant form $\langle -,- \rangle$. We will identify
$\g$ and $\g^*$ using this form.  Let $G$ be the adjoint group
corresponding to $\g$. Fix a nilpotent element $e \in \g$. By
the Jacobson-Morozov theorem, there exists an $\mathfrak{sl}_2$-triple
$(e, h, f)$, i.e., elements of $\g$ satisfying $[e,f] = h, [h,e]=2e$,
and $[h,f]=-2f$.  For $i \in \ZZ$, let $\g_i$ denote the
$h$-eigenspace of $\g$ of eigenvalue $i$.  Equip $\g$ with the
skew-symmetric form $\omega_e(x, y) := \langle e, [x,y]\rangle$.  This
restricts to a symplectic form on $\g_{-1}$. Fix a Lagrangian
$\mathfrak{l} \subset \g_{-1}$, and set
\begin{equation}
\mm_e := \mathfrak{l} \oplus \bigoplus_{i \leq -2} \g_{i}.
\end{equation}
Then, we define a shift of $\mm_e$ by $e$:
\begin{equation}
\mm_{e}' := \{x - \langle e, x \rangle: x \in \mm_e \}\subset \Sym \g.
\end{equation}

The classical $\cW$-algebra $\cW_e$ is
defined to be the Hamiltonian reduction of $\mathfrak{g}$ with respect
to $\mm_e$ and the character $\langle e,\cdot\rangle$, i.e., 
\begin{equation}
\cW_e := (\Sym \g / \mm_e' \cdot \Sym \g)^{\mm_e},
\end{equation}
where the invariants are taken with respect to the adjoint
action. It is well known that, up to isomorphism, $\cW_e$ is
independent of the choice of the $\mathfrak{sl}_2$-triple containing
$e$.

Since it is a Hamiltonian reduction, $\cW_e$ is naturally a Poisson
algebra. The bracket $\{\,, \}: \cW_e \otimes \cW_e \rightarrow \cW_e$
is induced by the standard bracket on $\Sym \g$.  The Poisson center of $\cW_e$
(i.e., elements $z$ such that $\{z, F\}=0$ for all $F$) is isomorphic
to $(\Sym \g)^\g$, by the embedding $(\Sym \g)^\g \into (\Sym
\g)^{\mm_e} \rightarrow (\Sym \g / (\mm_e' \Sym \g))^{\mm_e}$.  It is
known that this composition is injective (since, by Kostant's theorem, the
coset $e+\mm_e$ meets generic semisimple coadjoint orbits of $\g$).

Let $Z := (\Sym \g)^\g$ and $Z_+ = (\g \Sym \g)^\g$ be its
augmentation ideal.  We therefore have an embedding $Z \into \cW_e$,
and can consider the central quotient
\begin{equation}
\cW_e^0 := \cW_e / Z_+ \cW_e.
\end{equation}


\subsection{The Springer correspondence} 

We need to recall a version of the Springer correspondence between
representations of the Weyl group $W$ of $\g$ and certain $G$-equivariant
local systems on nilpotent orbits in $\g$. 

Let $\mathcal{N} \subset \g$ be the nilpotent cone.
  Let $\mathcal{B}$ be the flag variety of $\g$, consisting
of Borel subalgebras $\bb \subset \g$. Consider the
Grothendieck-Springer map $\rho: \widetilde \g := \{(\bb, g): g \in
\bb\} \subset \mathcal{B} \times \g \onto \g$, which restricts to the
Springer resolution $\widetilde{\mathcal{N}}:=\rho^{-1}(\mathcal{N})
\onto \mathcal{N}$.  Note that $\widetilde{\mathcal{N}}\cong
T^*\mathcal{B}$.

Let $W$ be the Weyl group of $\g$ and $\Irrep(W)$ its set of
irreducible representations, up to isomorphism. For each $\chi \in
\Irrep(W)$, denote by $V_\chi$ the underlying vector space and by $\chi:W
\rightarrow \Aut(V_\chi)$ the corresponding representation.  

Then, there is a well known isomorphism (e.g., \cite[Theorem 1.13]{Spr78})
\begin{equation}
H^{\dim_\RR \rho^{-1}(e)}(\rho^{-1}(e)) \cong \bigoplus_{\chi \in \Irrep_e(W)} \psi_\chi \otimes V_\chi,
\end{equation}
where $\Irrep(W) = \bigsqcup_{e \in \mathcal{N}/G} \Irrep_e(W)$, and
for all $\chi \in \Irrep_e(W)$, $\psi_\chi$ is a certain irreducible
representation of the component group $\pi_0(\Stab_G(e))$ of the stabilizer
of $e$ in $G$. For each $\chi \in \Irrep_e(W)$, let us use the
notation $\mathbb{O}_\chi := \bbO(e)=G \cdot e$.

\subsection{The main results}

For any Poisson algebra $A$, we consider the \emph{zeroth Poisson homology}, $\HP_0(A) := A / \{A, A\}$ (which is the same as the zeroth Lie homology). 
Its dual is the space of \emph{Poisson traces}, i.e., linear functionals
$A \rightarrow \CC$ which are invariant under Hamiltonian derivations 
$\lbrace{a, - \rbrace}$. 
 
As a consequence of \cite{GaGiqss} (see also \cite[\S 2.6]{Losfwa}),
there is a natural action of the stabilizer $\Stab_G(e,h,f)$ of the
$\mathfrak{sl}_2$-triple $(e,h,f)$ on $\cW_e$ by Poisson
automorphisms.  This is because of the alternative construction of
$\cW_e$ in \cite{GaGiqss} which is invariant under $\Stab_G(e,h,f)$:
$\cW_e = (\Sym \g / \mathfrak{n}_e' \cdot \Sym \g)^{\mathfrak{p}_e}$,
where $\mathfrak{n}_e' = \{x - \langle e, x \rangle: x \in
\bigoplus_{i \leq -2} \g_i \}$ and $\mathfrak{p}_e = \bigoplus_{i \leq
  -1} \g_i$.

Since this action on $\cW_e$ is Hamiltonian, it gives rise to an
action of $\pi_0(\Stab_G(e,h,f))$ on $\HP_0(\cW_e)$. Note that, since
$\Stab_G(e,h,f)$ is the reductive part of $\Stab_G(e)$, the component
group coincides with $\pi_0(\Stab_G(e))$. 
Clearly, this group also acts on $\HP_0(\cW_e^0)$. 

Our first main result is the following theorem.

\begin{theorem}\label{hp0thm} As $\pi_0(\Stab_G(e))$-representations,
\begin{equation}\label{hp0thmeq}
  \HP_0(\cW_e^0) \cong H^{\dim_\RR(\rho^{-1}(e))}(\rho^{-1}(e)) \cong \bigoplus_{\chi \in \Irrep_e(W)} \psi_\chi \otimes V_\chi.
\end{equation}
\end{theorem}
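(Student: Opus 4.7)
The plan is to translate the computation of $\HP_0(\cW_e^0)$ into a question in $D$-module theory, and then to apply the Hotta--Kashiwara presentation of the Springer $D$-module on $\cN$. First, using the Gan--Ginzburg reformulation $\cW_e = (\Sym \g / \mathfrak{n}_e' \cdot \Sym \g)^{\mathfrak{p}_e}$ recalled in the excerpt, I would identify $\Spec \cW_e^0$ with the nilpotent Slodowy slice $S_e \cap \cN$, where $S_e := e + \g^f$; this is because the central subalgebra $Z \hookrightarrow \cW_e$ corresponds to the restriction of $\g$-invariant polynomials along $S_e$, so that killing $Z_+$ cuts out the nilpotent fiber. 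The Poisson structure on $\cW_e^0$ is the transverse Poisson structure restricted to $S_e \cap \cN$.

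The second ingredient is our prior characterization of zeroth Poisson homology via $D$-modules: to an affine Poisson variety $X$ embedded in a smooth ambient space one associates a canonical right $D$-module $M(X)$, morally the quotient of $\cO_X$ by the image of all Hamiltonian derivations, and $\HP_0(\cO(X)) \cong H^0(a_* M(X))$, where $a\colon X \to \mathrm{pt}$. Thus the theorem reduces to computing $a_* M(S_e \cap \cN)$ and matching it with the top cohomology of $\rho^{-1}(e)$.

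To carry this out, I would realize $M(S_e \cap \cN)$ as the Hamiltonian reduction, with respect to $\mathfrak{p}_e$ and the character $\langle e, \cdot\rangle$, of a natural $D$-module on $\g$. The Hotta--Kashiwara theorem presents the Springer $D$-module $\rho_+ \cO_{\widetilde{\cN}}$ as an explicit cyclic quotient of $D_\g$ generated by a section annihilated by $\ad \g$ and by $Z_+$. Identifying this generator with the class of $1 \in M(S_e \cap \cN)$, the Hamiltonian reduction of the Hotta--Kashiwara module should coincide with $M(S_e \cap \cN)$. Granting this, properness of $\rho$ together with the Kazhdan $\CC^\times$-action contracting $S_e$ onto $e$ gives $a_* M(S_e \cap \cN) \cong H^*(\rho^{-1}(S_e \cap \cN)) \cong H^*(\rho^{-1}(e))$, with the degree shifts placing $\HP_0$ in the top real degree. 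All constructions are $\Stab_G(e,h,f)$-equivariant, so the resulting isomorphism respects the action of $\pi_0(\Stab_G(e,h,f)) = \pi_0(\Stab_G(e))$, and the Springer correspondence quoted in the excerpt supplies the final decomposition into the summands $\psi_\chi \otimes V_\chi$.

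The main obstacle is the identification of $M(S_e \cap \cN)$ with the Hamiltonian reduction of the Hotta--Kashiwara module. This amounts to showing that, after killing $\mathfrak{n}_e'$ and taking $\mathfrak{p}_e$-invariants, the only relations imposed by Hamiltonian derivations on $\cO(S_e \cap \cN)$ are precisely those coming from the $\ad \g$-action and from $Z_+$. One direction is essentially formal, but the reverse inclusion is subtle: Hamiltonian derivations on the transverse slice are not all induced from $\g$, and controlling them requires a careful analysis of the transverse Poisson bracket together with the Kazhdan grading to propagate relations from formal neighborhoods of $e$ to all of $S_e \cap \cN$.
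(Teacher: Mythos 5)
Your high-level strategy matches the paper's: both reduce $\HP_0(\cW_e^0)$ to a $D$-module pushforward via the \cite{ESdm} construction, both invoke Hotta--Kashiwara's $M_{\cN}\cong\rho_*(\Omega_{\widetilde{\cN}})$, and both finish with properness of $\rho$ and the contracting $\CC^\times$-action retracting $\rho^{-1}(\Spec\cW_e^0)$ onto $\rho^{-1}(e)$. The divergence is precisely at the step you flag as the ``main obstacle.'' You propose to obtain $M_{\Spec\cW_e^0}$ as a quantum Hamiltonian reduction (by $\mathfrak{p}_e$ with character $\langle e,\cdot\rangle$) of the Hotta--Kashiwara module on $\g$, and you correctly observe that one inclusion is subtle: Hamiltonian vector fields on the slice are not all induced from $\g$, so it is not clear that the reduction of the HK ideal yields exactly the ideal of Hamiltonian vector fields on $\Spec\cW_e^0$. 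You leave this unresolved, which makes the argument incomplete as written.

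The paper avoids this issue entirely by a local argument rather than a global reduction. Let $Y_e$ be a formal neighborhood of $\Spec\cW_e^0$ in $\cN$. The Darboux--Weinstein decomposition of the transverse Poisson structure gives a Poisson isomorphism $Y_e\cong\Spec\cW_e^0\times\widehat{[e,\g]}$ with the second factor (formally) symplectic, and compatibly $\widetilde Y_e\cong\rho^{-1}(\Spec\cW_e^0)\times\widehat{[e,\g]}$ with $\rho$ acting as $\rho\times\Id$. The $M$-construction is local and satisfies $M_{X\times S}\cong M_X\boxtimes\Omega_S$ when $S$ is symplectic, so $M_{Y_e}\cong M_{\Spec\cW_e^0}\boxtimes\Omega_{\widehat{[e,\g]}}$. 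Restricting the global isomorphism $M_{\cN}\cong\rho_*(\Omega_{\widetilde\cN})$ to $\widetilde Y_e$ and cancelling the common $\boxtimes\,\Omega_{\widehat{[e,\g]}}$ factor then gives $M_{\Spec\cW_e^0}\cong\rho_*(\Omega_{\rho^{-1}(\Spec\cW_e^0)})$ with no need to analyze Hamiltonian vector fields on the slice directly or to compare ideals under reduction. So while your overall plan is right, the missing lemma you would need to prove (that Hamiltonian reduction of the HK module equals $M_{\Spec\cW_e^0}$) is circumvented in the paper by this formal-neighborhood product argument; if you want to complete your route you would essentially have to reprove the slice decomposition in a form that controls the extra Hamiltonian vector fields, which is more work than the paper's approach.
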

Here the action on the right hand side is in the first component. 

\begin{remark} 
  There is a slightly different way to view the Springer
  correspondence through \cite{HKihs} which further
  explains the above results. 
Namely, for a smooth variety $X$, denote by
$\Omega_X$ the right $D$-module of volume forms on $X$. 
Then \cite[Theorem 5.3]{HKihs} states that
\begin{equation} \label{th53eqn}
\rho_*(\Omega_{\widetilde{N}}) \cong 
\bigoplus_{\chi \in \Irrep(W)} \mathcal{M}_\chi \otimes V_\chi,
\end{equation}
where $\mathcal{M}_\chi$ are irreducible, 
holonomic, pairwise nonisomorphic $G$-equivariant right $D$-modules 
on $\mathcal{N}$.

Each $D$-module $\mathcal{M}_\chi$ is uniquely determined by
its support, which is the closure, $\overline{\bbO}$, of a nilpotent
coadjoint orbit $\bbO = \bbO(e) \subset \mathcal{N}$ (i.e., a
symplectic leaf of $\mathcal{N}$), together with a $G$-equivariant
local system on $\bbO$ (the restriction of ${\mathcal M}_\chi$ to
$\Bbb O$).  Then, $\bbO = \bbO_\chi$, and
the local system is $\psi_\chi$.

Taking the pushforward of \eqref{th53eqn} to a point, one can deduce
that $\HP_0(\cW_e^0)$ is isomorphic to the RHS of \eqref{hp0thmeq} using
the method of \cite{ESdm} recalled in \S \ref{esdms} below.
\end{remark}

Next, let $U\g$ denote the universal enveloping algebra of $\g$, and
let $\cW_e^{q} := (U\g / \mm_e' U \g)^{\mm_e}$ be the quantum
$\cW$-algebra, which is a filtered (in general, noncommutative)
algebra whose associated graded algebra is $\cW_e$, as a Poisson
algebra.  The center of $\cW_e^q$ is an isomorphic image of $Z(U\g)$,
which is identified with $Z$ as an algebra via the Harish-Chandra
isomorphism.  Let $\eta: Z \rightarrow \CC$ be a character, and define
the algebras $\cW_e^{\eta} := \cW_e/(\ker(\eta))$ and $\cW_e^{q,\eta}
:= \cW_e^{q}/(\ker(\eta))$. These are filtered Poisson (respectively,
associative) algebras whose associated graded algebras are $\cW_e^0$.
Moreover, using the construction of \cite{GaGiqss} as above (i.e.,
$\cW_e^q \cong (U\g / \mathfrak{n}_e' U\g)^{\mathfrak{p}_e}$),
$\cW_e^{q}$ as well as $\cW_e^{q,\eta}$ admit actions of
$\Stab_G(e,h,f)$ (as does $\cW_e^{\eta}$, for all $\eta$).
Since this action is Hamiltonian, $\HP_0(\cW_e^{\eta})$ and
$\HH_0(\cW_e^{q,\eta})$ admit actions of $\pi_0(\Stab_G(e)) =
\pi_0(\Stab_G(e,h,f))$ for all $\eta$.

Consider the zeroth Hochschild homology 
$\HH_0(\cW_e^{q,\eta}) :=
\cW_e^{q,\eta}/[\cW_e^{q,\eta}, \cW_e^{q,\eta}]$.
There is a canonical surjection $\HP_0(\cW_e^0) \onto \gr \HH_0(\cW_e^{q,\eta})$.  

\begin{theorem} \label{hp0ccthm}
(i) The canonical surjection $\HP_0(\cW_e^0) \iso \gr \HH_0(\cW_e^{q,\eta})$ is an isomorphism.

(ii)  The families $\HP_0(\cW_e^{\eta})$ and $\HH_0(\cW_e^{q,\eta})$ are
  flat in $\eta$.  In particular, for all $\eta$, they are isomorphic
  to the top cohomology of the Springer fiber, $H^{\dim_\RR
    \rho^{-1}(e)}(\rho^{-1}(e))$, as representations of $\pi_0(\Stab_G(e))$.

  (iii) The groups $\HP_0(\cW_e)$ and $\HH_0(\cW_e^q)$ are
isomorphic to $Z\otimes H^{\dim_\RR \rho^{-1}(e)}(\rho^{-1}(e))$ 
as $Z[\pi_0(\Stab_G(e))]$-modules.
\end{theorem}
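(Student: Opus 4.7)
All three parts follow from Theorem \ref{hp0thm} by analyzing the $Z$-module structure of $\HP_0(\cW_e)$ and $\HH_0(\cW_e^q)$ and deforming in the central parameter $\eta \in \Spec Z$. Set $d := \dim H^{\dim_\RR \rho^{-1}(e)}(\rho^{-1}(e))$.

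The Poisson center $Z \subset \cW_e$ (respectively, the center of $\cW_e^q$ identified with $Z$ via Harish-Chandra) makes $\HP_0(\cW_e)$ and $\HH_0(\cW_e^q)$ into $Z$-modules, and a direct quotient computation yields the base-change identities $\HP_0(\cW_e^\eta) \cong \HP_0(\cW_e) \otimes_Z \CC_\eta$ and $\HH_0(\cW_e^{q,\eta}) \cong \HH_0(\cW_e^q) \otimes_Z \CC_\eta$. The Kazhdan grading makes $\HP_0(\cW_e)$ a graded $Z$-module with fiber of dimension $d$ at $\eta = 0$ by Theorem \ref{hp0thm}, so graded Nakayama produces a graded surjection $Z^d \onto \HP_0(\cW_e)$; tensoring with $\CC_\eta$ gives $\dim \HP_0(\cW_e^\eta) \leq d$ for every $\eta$. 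The canonical surjection $\HP_0(\cW_e^0) \onto \gr \HH_0(\cW_e^{q,\eta})$ mentioned in the excerpt, arising because the commutator in $\cW_e^{q,\eta}$ has principal symbol the Poisson bracket on $\gr \cW_e^{q,\eta} = \cW_e^0$, similarly forces $\dim \HH_0(\cW_e^{q,\eta}) \leq d$ for all $\eta$.

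The crux is the reverse bound $\dim \HP_0(\cW_e^\eta) \geq d$ at every $\eta$ (and likewise for $\HH_0$). I would obtain it by upgrading the $D$-module argument sketched in the remark after Theorem \ref{hp0thm} from the nilpotent cone $\cN$ to all of $\g$. Namely, the full Grothendieck-Springer map $\rho\colon \widetilde\g \to \g$ admits an analog of the Hotta-Kashiwara decomposition \eqref{th53eqn} in which $\rho_* \Omega_{\widetilde\g}$ splits as a sum over $\Irrep(W)$ of intermediate extensions of $W$-isotypic local systems on the regular semisimple locus. Combined with the $D$-module description of $\HP_0$ from \cite{ESdm} applied to the family of Hamiltonian reductions $\Spec \cW_e \to \Spec Z = \g/\!/G$ (whose fiber over $\eta$ is $\Spec \cW_e^\eta$), proper base change identifies the pushforward-to-a-point of the specialized Springer $D$-module at each $\eta$ with the Springer fiber cohomology, giving $\dim \HP_0(\cW_e^\eta) = d$ uniformly. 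Parts (i) and (ii) are then forced: the surjection in (i) becomes an isomorphism, and constancy of fiber dimensions over the polynomial ring $Z$ shows that $\HP_0(\cW_e)$ and $\HH_0(\cW_e^q)$ are free $Z$-modules of rank $d$.

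Part (iii) is then immediate: freeness of rank $d$ over $Z$, combined with Theorem \ref{hp0thm} identifying the $\eta = 0$ fiber $\pi_0(\Stab_G(e))$-equivariantly with $H^{\dim_\RR \rho^{-1}(e)}(\rho^{-1}(e))$, and the fact that the $Z$-action commutes with the $\pi_0(\Stab_G(e))$-action (since $\Stab_G(e,h,f)$ preserves the $\mathfrak{sl}_2$-triple and hence the central embedding), extends to a $Z[\pi_0(\Stab_G(e))]$-equivariant isomorphism $\HP_0(\cW_e) \cong Z \otimes H^{\dim_\RR \rho^{-1}(e)}(\rho^{-1}(e))$, with the same argument applied to $\HH_0(\cW_e^q)$. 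The main obstacle is thus the uniform lower bound at every $\eta$; the $D$-module route bypasses any explicit construction of traces, but correctly matching the fiber of the pushed-forward Springer $D$-module to $\HP_0(\cW_e^\eta)$ via proper base change, in a way compatible with both the \cite{HKihs} and \cite{ESdm} frameworks, is the principal technical task.
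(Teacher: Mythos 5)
Your upper-bound half is sound and essentially matches the paper's: the canonical surjection $\HP_0(\cW_e^0) \onto \gr\HH_0(\cW_e^{q,\eta})$ (equivalently, your graded Nakayama argument applied to the f.g.\ graded $Z$-module) gives $\dim\HH_0(\cW_e^{q,\eta}) \le d := \dim H^{\dim_\RR\rho^{-1}(e)}(\rho^{-1}(e))$ for every $\eta$, and the same for $\HP_0(\cW_e^\eta)$. Part (iii) from freeness is also how the paper concludes.

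The gap is in the lower bound, which you correctly identify as the crux but then attack the hard way. The paper does not attempt a uniform $D$-module computation for all $\eta$; instead it combines two cheaper facts. First, upper semicontinuity of fiber dimension for a finitely generated $Z$-module shows that the \emph{minimum} of $\dim\HH_0(\cW_e^{q,\eta})$ is attained at generic $\eta$. Second, Theorem~\ref{hpdrthm} (whose proof for generic $\eta$ only needs that $\Spec\cW_e^\eta$ is smooth symplectic, so $M_{\Spec\cW_e^\eta}=\Omega_{\Spec\cW_e^\eta}$, plus Slodowy's topological triviality of the family $\rho^{-1}(\Spec\cW_e^\eta)$) shows that this generic value equals $d$. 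Hence $\dim\HH_0(\cW_e^{q,\eta})\ge d$ for all $\eta$, matching the upper bound. Your proposed route---decomposing $\rho_*\Omega_{\widetilde\g}$ and using ``proper base change'' to identify the pushforward at each $\eta$ with $\HP_0(\cW_e^\eta)$---requires establishing $M_{\Spec\cW_e^\eta}\cong\rho_*\Omega_{\rho^{-1}(\Spec\cW_e^\eta)}$ for \emph{all} $\eta$, including singular non-zero fibers. The paper only proves this at $\eta=0$ (via a formal-neighborhood/Kashiwara argument using Hotta--Kashiwara on $\cN$), and for generic $\eta$ uses smoothness instead; you would need a new argument for the intermediate $\eta$, which you do not supply and which the semicontinuity trick renders unnecessary. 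As written, this step does not go through.

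A secondary point: the flatness of $\HP_0(\cW_e^\eta)$ claimed in (ii) is not proved afresh in the paper's proof of Theorem~\ref{hp0ccthm}; it is already contained in Theorem~\ref{hpdrthm} (the case $i=0$). Your write-up treats it as needing the same new argument as $\HH_0$, which obscures the logical structure: only the associative side needs the extra semicontinuity step, since Theorem~\ref{hpdrthm}'s generic statement about $\HH_i$ is asserted only for generic $\eta$.
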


Theorem \ref{hp0ccthm} follows from Theorem 
\ref{hpdrthm}, as explained below.

\begin{corollary}\label{dodd} (C. Dodd, \cite{D10})
For every central character $\eta$, 
the number of distinct 
irreducible finite-dimensional representations of
$\cW_e^{q,\eta}$ is at most $\dim H^{\dim_{\Bbb
R}\rho^{-1}(e)}(\rho^{-1}(e))$.  
\end{corollary}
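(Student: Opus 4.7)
The plan is to deduce the corollary from Theorem \ref{hp0ccthm}(ii) via the standard principle that the number of nonisomorphic irreducible finite-dimensional representations of an associative algebra $A$ is bounded above by $\dim \HH_0(A)$.

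First, I would apply Theorem \ref{hp0ccthm}(ii), which identifies $\dim \HH_0(\cW_e^{q,\eta}) = \dim H^{\dim_\RR \rho^{-1}(e)}(\rho^{-1}(e))$. So it suffices to show that the number of nonisomorphic irreducible finite-dimensional representations of $\cW_e^{q,\eta}$ is at most $\dim \HH_0(\cW_e^{q,\eta})$.

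Next, I would implement this principle. Let $V_1,\dots,V_k$ be pairwise nonisomorphic irreducible finite-dimensional representations of $A := \cW_e^{q,\eta}$, with structure maps $\rho_i \colon A \to \operatorname{End}(V_i)$. Each $\rho_i$ is surjective by Jacobson's density theorem (since $V_i$ is a finite-dimensional irreducible module). Setting $V := \bigoplus_i V_i$, the module $V$ is semisimple and the $V_i$ are its pairwise nonisomorphic isotypic components, so a second application of density yields that the product map
\begin{equation*}
\rho := (\rho_1,\dots,\rho_k)\colon A \twoheadrightarrow \prod_{i=1}^k \operatorname{End}(V_i)
\end{equation*}
is surjective. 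Applying the functor $\HH_0$, which is right exact on surjections of associative algebras (commutators map to commutators) and commutes with finite direct products, and using that $\HH_0(\operatorname{End}(V_i)) \cong \CC$ via the trace, I obtain a surjection
\begin{equation*}
\HH_0(\cW_e^{q,\eta}) \twoheadrightarrow \bigoplus_{i=1}^k \CC = \CC^k.
\end{equation*}
Hence $k \leq \dim \HH_0(\cW_e^{q,\eta}) = \dim H^{\dim_\RR \rho^{-1}(e)}(\rho^{-1}(e))$, as desired.

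There is no substantive obstacle here: all the hard work lies in Theorem \ref{hp0ccthm}(ii). The only points requiring care are the simultaneous surjectivity onto $\prod_i \operatorname{End}(V_i)$ (handled cleanly by Jacobson density applied to $\bigoplus V_i$, using that the $V_i$ are nonisomorphic irreducibles) and the right exactness of $\HH_0$ (immediate from its definition as the quotient by the commutator subspace).
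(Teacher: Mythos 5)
Your proposal is correct and follows the same approach as the paper: both deduce the bound from Theorem \ref{hp0ccthm}(ii) together with the standard fact that the number of nonisomorphic irreducible finite-dimensional representations of an associative algebra $A$ is at most $\dim A/[A,A]$. The only cosmetic difference is that you implement this fact via a Jacobson density surjection $A\twoheadrightarrow \prod_i\operatorname{End}(V_i)$ and right exactness of $\HH_0$, while the paper states the dual formulation, namely that characters of nonisomorphic irreducibles are linearly independent functionals on $A/[A,A]$; these are two sides of the same argument.
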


\begin{proof}
  This immediately follows from the above theorem, because the number
  of isomorphism classes of irreducible finite-dimensional
  representations of any associative algebra $A$ is dominated by $\dim
  A/[A,A]$ (since characters of nonisomorphic irreducible
  representations are linearly independent functionals on $A/[A,A]$).
\end{proof}

\begin{remark}
The argument in the appendix to \cite{ESdm} 
by I. Losev together with Corollary \ref{dodd} also implies an 
upper bound on the number $N_e$ of prime (or, equivalently, primitive)
ideals in $\cW_e^{q,\eta}$. For every nilpotent orbit $\mathbb{O}_{e'}$ whose
closure contains $e$, let $M_{e,e'}$ denote the number of irreducible components  of the intersection $\overline{\mathbb{O}_{e'}} \cap \Spec \cW_e$ of the closure of the orbit
$\mathbb{O}_{e'}$ with the Kostant-Slodowy slice to $e$.  Then,
$$
N_e\le \sum_{\overline{\mathbb{O}_{e'}} \ni e} M_{e,e'} \cdot 
\dim H^{\dim_\RR\rho^{-1}(e')}(\rho^{-1}(e')),
$$
where the sum is over the distinct orbits $\mathbb{O}_{e'}$ whose
closure contains $e$.  Briefly, we explain this as follows: Losev's
appendix to \cite{ESdm} gives a map from finite-dimensional
irreducible representations of $\cW_{e'}^{q,\eta}$ to prime ideals of
$\cW_e^{q,\eta}$ supported on the irreducible component of
$\mathbb{O}_{e'} \cap \Spec \cW_e$ containing $e'$, and shows that all
prime ideals are constructed in this way. (More precisely, in
\emph{op.~cit.}, a construction is given of all prime ideals of
filtered quantizations of affine Poisson varieties with finitely many
symplectic leaves, which specializes to this one since the
aforementioned irreducible components coincide with the symplectic
leaves of $\Spec \cW_e^0$, and $\cW_{e}^{q,\eta}$ and
$\cW_{e'}^{q,\eta}$ are quantizations of $\cW_e^0$ and $\cW_{e'}^0$,
respectively.)  Then, the bound follows from Corollary \ref{dodd}.
\end{remark}

\subsection{Higher homology}
Finally, following \cite{ESdm}, one may consider the higher
\emph{Poisson-de Rham homology} groups, $\HP^{DR}_i(\cW_e^\eta)$, of
$\cW_e^\eta$, whose definition we recall in the following section.
Here, we only need to know that $\HP^{DR}_0(A)=\HP_0(A)$ for all
Poisson algebras $A$ (although the same is \emph{not} true for higher
homology groups). Let $\eta: Z \rightarrow \CC$ be an \emph{arbitrary}
central character.  

The following theorem is a direct generalization of Theorem
\ref{hp0thm}. Hence, we only
prove this theorem, and omit the proof of the aforementioned
theorem.
\begin{theorem} \label{hpdrthm} 
As $\pi_0(\Stab_G(e))$-representations,
$\HP^{DR}_i(\cW_e^\eta) \cong H^{\dim_\RR \rho^{-1}(e) - i}(\rho^{-1}(e))$. Moreover, for
generic $\eta$, $\HH_i(\cW_e^{q,\eta})$ is also isomorphic to these.
\end{theorem}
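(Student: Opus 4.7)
The plan is to deduce Theorem \ref{hpdrthm} from the $D$-module approach to Poisson-de Rham homology developed in \cite{ESdm}. Recall that for any affine Poisson variety $Y$, there is a canonical right $D$-module $M(Y)$ on $Y$ whose de Rham cohomology (derived pushforward to a point, with a degree shift) computes $\HP^{DR}_\bullet(\cO(Y))$. Taking $Y = \Spec(\cW_e^\eta)$, which sits inside $\g$ as the intersection of the Kostant-Slodowy slice at $e$ with the $\eta$-fiber of the adjoint quotient $\g \to \Spec Z$, reduces the theorem to a computation of the de Rham cohomology of $M(\cW_e^\eta)$.

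The central step is to identify $M(\cW_e^\eta)$ with the Hamiltonian reduction (under the action of $\mm_e$ with moment value $\langle e, \cdot \rangle$) of a Harish-Chandra-type $D$-module $\mathcal L^\eta$ on $\g$. Here $\mathcal L^\eta$ is the $D_\g$-module presented by a single generator $\delta$ subject to the relations $\ad(x)\cdot\delta = 0$ for $x \in \g$ and $(z - \eta(z))\cdot\delta = 0$ for $z \in Z$; the Hotta-Kashiwara theorem \cite{HKihs} identifies $\mathcal L^0$ with the Springer $D$-module $\rho_*\Omega_{\widetilde \cN}$, and $\mathcal L^\eta$ more generally with the corresponding pushforward over the $\eta$-fiber via the Grothendieck-Springer map. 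I would verify the identification with $M(\cW_e^\eta)$ by matching universal properties: both are the universal quotient of $D_Y$ killing all Hamiltonian derivations of $\cW_e^\eta$, using that Hamiltonian reduction commutes with the universal construction of $M$.

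Granting this identification, the de Rham cohomology of $M(\cW_e^\eta)$ is computed by proper base change for the (proper) Grothendieck-Springer map $\rho$ combined with Kostant-Slodowy transversality of the slice to the orbit $G\cdot e$ at $e$: the result is $H^\bullet(\rho^{-1}(e))$ in the complementary degree specified in the statement. The $\pi_0(\Stab_G(e))$-equivariance follows from the Hotta-Kashiwara decomposition $\rho_*\Omega_{\widetilde \cN} \cong \bigoplus_\chi \mathcal M_\chi \otimes V_\chi$ together with the identification of $\mathcal M_\chi$ as the intermediate extension of the local system $\psi_\chi$ on $\bbO_\chi$. For the final assertion about $\HH_i(\cW_e^{q,\eta})$ for generic $\eta$, I would invoke a Brylinski-type spectral sequence from Poisson-de Rham homology of the classical algebra to Hochschild homology of its quantization, which degenerates for generic $\eta$ because the quantization is generically Azumaya on each symplectic leaf of $\Spec \cW_e^0$.

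The principal obstacle is the central identification $M(\cW_e^\eta) \cong$ Hamiltonian reduction of $\mathcal L^\eta$. This requires verifying both that the Hotta-Kashiwara generators and relations descend correctly through the Hamiltonian reduction procedure, and that no additional relations are needed on the slice side, i.e., that every Hamiltonian derivation of $\cW_e^\eta$ arises either from the $\ad(\g)$-action on $\Sym\g$ or from multiplication by elements of $Z$ before reducing. Once this bridge between the Springer $D$-module and the Poisson-de Rham $D$-module of $\cW_e^\eta$ is in place, the remaining steps reduce to standard $D$-module computations and the Hotta-Kashiwara decomposition.
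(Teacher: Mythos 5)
Your proposal shares the paper's skeleton: use the Hotta--Kashiwara theorem $M_{\cN}\cong\rho_*\Omega_{\widetilde\cN}$ (your $\mathcal L^0$ is by definition the paper's $M_{\cN}$) and compute $\HP^{DR}_i$ as the derived pushforward of the Poisson $D$-module $M_{\Spec\cW_e^\eta}$ to a point. But there are three concrete gaps.

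First, the identification of $M_{\Spec\cW_e^\eta}$ via Hamiltonian reduction of a $D$-module $\mathcal L^\eta$ on $\g$ is not what the paper does, and you correctly flag it as the principal obstacle; it remains an obstacle. For $\eta=0$ the paper instead uses a formal-neighborhood/product argument: $\widetilde Y_e\cong\rho^{-1}(\Spec\cW_e^0)\times\widehat{[e,\g]}$ and $Y_e\cong\Spec\cW_e^0\times\widehat{[e,\g]}$, with $\rho$ a product on these, so restricting $M_{\cN}\cong\rho_*\Omega_{\widetilde\cN}$ and cancelling the transverse $\widehat{[e,\g]}$-factor gives $M_{\Spec\cW_e^0}\cong\rho_*\Omega_{\rho^{-1}(\Spec\cW_e^0)}$. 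For $\eta\neq 0$ the paper does not invoke Hotta--Kashiwara at all; instead it treats generic $\eta$ directly ($\Spec\cW_e^\eta$ smooth symplectic, hence $M_{\Spec\cW_e^\eta}=\Omega_{\Spec\cW_e^\eta}$, and $\rho$ an isomorphism there), then deduces all $\eta$ by flatness in $\eta$, using the contracting $\CC^*$-action and the topological triviality of the family $\rho^{-1}(\Spec\cW_e^\eta)$. Your plan, if it could be made to work, would handle all $\eta$ uniformly and would be more conceptual, but merely ``matching universal properties'' is not enough: the defining right ideal of $M_{\Spec\cW_e^\eta}$ involves all vector fields on the ambient space tangent to $\Spec\cW_e^\eta$ that restrict to Hamiltonian derivations, and you would need to prove these are generated by the images of $\ad(\g)$ and $Z$ after reduction, which is not obvious.

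Second, even granting the identification $M_{\Spec\cW_e^0}\cong\rho_*\Omega_{\rho^{-1}(\Spec\cW_e^0)}$, the derived pushforward to a point computes the cohomology of $\rho^{-1}(\Spec\cW_e^0)$, not of the single fiber $\rho^{-1}(e)$. Proper base change gives the stalk of $\rho_*\Omega_{\widetilde\cN}$ at $e$, which is a different object. The missing step is that the contracting $\CC^*$-action on $\Spec\cW_e^0$ lifts to a deformation retraction of $\rho^{-1}(\Spec\cW_e^0)$ onto $\rho^{-1}(e)$ in the complex topology; this topological input is what lets the paper replace the preimage of the slice by the Springer fiber.

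Third, the Hochschild step is misframed: Azumaya is not the relevant notion over $\CC((\hbar))$, and no spectral sequence degeneration argument is used. The paper passes to the $\hbar$-adically completed Rees algebra, forms $\cW_e^{\hbar,\eta}$ as a formal deformation quantization of $\cW_e^\eta$, uses the rescaling $x\mapsto\hbar x$ to identify $\cW_e^{\hbar,\eta}[\hbar^{-1}]\cong\cW_e^{q,\eta/\hbar}$, and applies the theorems of Nest--Tsygan and Brylinski computing Hochschild homology of a deformation quantization of a smooth affine symplectic variety as its de Rham cohomology. The smooth-symplectic hypothesis is what holds for generic $\eta$; there is no degeneration criterion to check.
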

\begin{remark}
For $e=0$ one has $\cW_e^q=U\g$, and the algebras 
$\cW_e^{q,\eta}$ are the maximal primitive quotients of $U\g$.
In this case, the last statement of Theorem \ref{hpdrthm} holds for all regular 
characters $\eta$ (see \cite{Soehoc} and \cite{VdBdual, VdBdualerr}).
On the other hand, the genericity assumption for $\eta$ cannot be 
removed for $i>0$. Namely, for non-regular values of $\eta$, it is, in general, 
\emph{not} true that $\HH_i(\cW_e^{q,\eta})$ is isomorphic to the cohomology $H^{\dim_\RR
 \rho^{-1}(e) - i}(\rho^{-1}(e))$ of the Springer fiber.  For
example, when $e=0$ in $\g = \mathfrak{sl}_2$,
  then the variety $\Spec \cW_e^0$ is the cone
  $\CC^2/\ZZ_2$. In this case, by \cite[Theorem
  2.1]{FSSAhhc} (and the preceding comments), $\HH_i(\cW_e^{q,\eta})
  \neq 0$ for all $i \geq 3$ when $\eta: Z(U \g) \rightarrow \CC$ is
  the special central character corresponding to the Verma module with
  highest weight $-1$, i.e., the character corresponding to the fixed
  point of the Cartan $\mathfrak{h}$ under the shifted Weyl group
  action.
\end{remark}

\section{The construction of \cite{ESdm}}\label{esdms}
We prove Theorem \ref{hpdrthm} using the method
of \cite{ESdm}, which we now recall.  

To a smooth affine Poisson variety $X$, we attached the right
$D$-module $M_X$ on $X$ defined as the quotient of the algebra of
differential operators $D_X$ by the right ideal generated by
Hamiltonian vector fields.  Then, $\HP_0(\cO_X)$ identifies with the
(underived) pushforward $M_X \otimes_{D_X} \cO_X$ of $M_X$ to a point.

More generally, if $X$ is not necessarily smooth, but equipped with a
closed embedding $i: X \into V$ into a smooth affine variety $V$
(which need not be Poisson), we defined the right $D$-module $M_{X,i}$
on $V$ as the quotient of $D_V$ by the right ideal generated by
functions on $V$ vanishing on $X$ and vector fields on $V$ tangent to $X$ 
which restrict on $X$ to Hamiltonian vector fields.  This is independent of the choice of
embedding, in the sense that the resulting $D$-modules on $V$
supported on $X$ correspond to the same $D$-module on $X$ (up to a
canonical isomorphism) via Kashiwara's theorem. Call this $D$-module
$M_X$.  The pushforward of $M_{X}$ to a point remains isomorphic to
$\HP_0(\cO_X)$.

More generally, for an arbitrary affine variety $X$, we defined the
groups $\HP^{DR}_i(\cO_X)$ as the full (left derived) pushforward of
$M_X$ to a point.

\section{Proof of Theorem \ref{hpdrthm}}
  
Our main tool is 
\begin{theorem}\cite[Theorem 4.2 and Proposition 4.8.1.(2)]{HKihs} (see also \cite[\S
  7]{LSssi}) \label{hkthm}
\begin{equation}
M_{\mathcal N} \cong \rho_*(\Omega_{\widetilde {\mathcal N}}).
\end{equation}
\end{theorem}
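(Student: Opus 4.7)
The plan is to exhibit both sides of the claimed isomorphism as quotients of $D_\g$ by one and the same right ideal $J \subset D_\g$, namely the right ideal generated by the defining ideal $I(\cN)$ of $\cN \subset \g$ together with the adjoint-action vector fields $\xi_x = \{x,\cdot\}$, $x \in \g$. The left-hand side $M_\cN$ will be shown to equal $D_\g/J$ by unwinding its definition; for the right-hand side I will invoke Hotta-Kashiwara's presentation theorem.

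For $M_\cN$, I start from its definition via the embedding $i: \cN \into \g$: it is the quotient of $D_\g$ by $I(\cN) \cdot D_\g$ together with vector fields on $\g$ which are tangent to $\cN$ and restrict there to Hamiltonian vector fields. The reduction to the generators $\xi_x$ proceeds in two steps. First, every Hamiltonian vector field on $\cN$ is an $\cO_\cN$-linear combination of the restrictions $\xi_x|_\cN$: indeed, on $\g^* \cong \g$ with its Kirillov-Kostant bracket one has $\{x,\cdot\} = \xi_x$ for linear $x$, and by the derivation property of the bracket $\{f,\cdot\} = \sum_i (\partial f/\partial x_i)\,\xi_{x_i}$ for any $f$, which descends to $\cN$. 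Second, lifting the $\cO_\cN$-coefficients arbitrarily to $\cO_\g$ and noting that any vector field on $\g$ which restricts to zero on $\cN$ automatically has $I(\cN)$-valued coefficients, one concludes that modulo $I(\cN) \cdot D_\g$ the second set of generators is spanned by the $\xi_x$. Combined with Kostant's theorem $I(\cN) = Z_+ \cdot \cO_\g$, this yields $M_\cN \cong D_\g/J$.

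For $\rho_*(\Omega_{\widetilde{\cN}})$, I invoke Theorem 4.2 and Proposition 4.8.1.(2) of Hotta-Kashiwara (with \cite[\S 7]{LSssi} as an alternative source), which together present $\rho_*(\Omega_{\widetilde{\cN}})$ as $D_\g$ modulo the right ideal generated by $Z_+$ (the defining equations of $\cN$) and the adjoint vector fields $\xi_x$ (reflecting $G$-equivariance on the Springer resolution). This is the same ideal $J$, so matching the two presentations completes the proof.

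The main obstacle is the Hotta-Kashiwara presentation itself, which is the only genuinely nontrivial input. Its proof is delicate: it proceeds via a Fourier transform on $\g$ identifying $\rho_*(\Omega_{\widetilde{\g}})$ with the Harish-Chandra system, together with an analysis of the restriction to the nilpotent cone through Kashiwara's equivalence between $D$-modules on $\cN$ and $D_\g$-modules supported on $\cN$. For the purposes of this paper I treat it as a black box; the point of the reformulation of $M_\cN$ outlined above is precisely to be able to plug it in cleanly.
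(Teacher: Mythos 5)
The paper does not give a proof of this statement at all: it is imported wholesale from Hotta--Kashiwara (with Levasseur--Stafford as an alternative reference) and used as a black box in the proof of Theorem~\ref{hpdrthm}. Your proposal does essentially the same thing --- the only genuinely nontrivial input, the Hotta--Kashiwara presentation of $\rho_*(\Omega_{\widetilde{\cN}})$ as $D_\g$ modulo the right ideal generated by $Z_+$ and the adjoint vector fields, is also treated as a black box --- but you add a useful clarification, namely why the definition of $M_\cN$ recalled in \S\ref{esdms} matches that presentation. Two small remarks on that clarification. First, you should note that $I(\cN)=Z_+\cdot\cO_\g$ is Kostant's theorem (you do cite it). Second, and more seriously, the step from ``modulo $I(\cN)\cdot D_\g$ the generating vector fields lie in the $\cO_\g$-span of the $\xi_x$'' to ``the two \emph{right} ideals coincide'' is not automatic, and you do not address it. In $D_\g$ one has $g\,\xi_x = \xi_x\,g - \xi_x(g)$, so for a lift $\tilde f$ of $f\in\cO_\cN$ the vector field $\sum_i(\partial\tilde f/\partial x_i)\,\xi_{x_i}$ lies in $\sum_x\xi_x D_\g + \cO_\g$, with function part $-\sum_i\xi_{x_i}(\partial\tilde f/\partial x_i)$; a priori this function need not lie in $I(\cN)$. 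What saves the argument is that $\sum_i\xi_{x_i}(\partial\tilde f/\partial x_i)$ is the divergence of the Hamiltonian vector field of $\tilde f$ with respect to the translation-invariant volume on $\g\cong\g^*$, and for the Lie--Poisson structure on the dual of a Lie algebra with $\operatorname{tr}(\ad x)=0$ (in particular $\g$ semisimple) this divergence vanishes identically. With that observation supplied, your identification of the two right ideals --- and hence the derivation of the stated isomorphism from the Hotta--Kashiwara presentation --- is correct.
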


We now begin the proof of Theorem \ref{hpdrthm}.  First, take $\eta =
0$. Since $M_{\mathcal{N}} \cong
\rho_*(\Omega_{\widetilde{\mathcal{N}}})$, it follows that, letting $\pi$ and $\widetilde \pi$ 
denote the projections of ${\mathcal N}$ and $\widetilde{\mathcal N}$ 
to a point, 
$$
\HP^{DR}_i(\cO_{\mathcal{N}}) = L_i \pi_*(M_\mathcal{N})\cong
L_i {\widetilde \pi}_*(\Omega_{\widetilde{\mathcal{N}}}) \cong H^{\dim_\CC
  \widetilde{\mathcal{N}} -i}(\widetilde{\mathcal{N}}).
$$
Similarly, if we consider $\Spec \cW_e^0 \subseteq \mathcal{N}$ (the
intersection of a Kostant-Slodowy slice to the orbit of $e$ with
$\mathcal{N}$), then it follows, viewing all varieties as embedded in
the smooth variety $\g$, that $\rho_*(\Omega_{\rho^{-1}(\Spec
  \cW_e^0)}) \cong M_{\Spec \cW_e^0}$, since $\rho^{-1}(\Spec
\cW_e^0) \subset \widetilde{\cN}$ is smooth.  In more detail, let
$Y_e$ denote a formal neighborhood of $\Spec \cW_e^0$ in $\cN$,
$\widetilde Y_e$ denote a formal neighborhood of $\rho^{-1}(\Spec
\cW_e^0)$ in $\widetilde \cN$, and $\widehat{[e,\g]}$ denote a formal
completion of $[e,\g]$ at $0$. Then, $\widetilde Y_e \cong \rho^{-1}(\Spec
\cW_e^0) \times \widehat{[e, \g]}$ and $Y_e \cong \Spec \cW_e^0 \times
\widehat{[e,\g]}$. With these identifications, $\rho|_{\widetilde Y_e}
= \rho|_{\rho^{-1}(\Spec \cW_e^0)} \times \Id_{\widehat{[e, \g]}}$.
Then, $\Omega_{\widetilde Y_e} \cong \Omega_{(\rho^{-1}(\Spec
  \cW_e^0)} \boxtimes \Omega_{\widehat{[e, \g]}}$ and $M_{Y_e} \cong
\Omega_{\Spec \cW_e^0} \boxtimes \Omega_{\widehat{[e, \g]}}$.  Since
$M_{\cN} \cong \rho_* (\Omega_{\widetilde \cN})$, restricting to
$\widetilde Y_e$ yields $M_{Y_e} \cong \rho_*( \Omega_{\widetilde
  Y_e})$, and we conclude from the above that $M_{\Spec \cW_e^0} \cong
\rho_* (\Omega_{\rho^{-1}(\Spec \cW_e^0)})$.

 Since $\rho^{-1}(\Spec \cW_e^0) \rightarrow \Spec \cW_e^0$ is birational,
$\dim_\CC \rho^{-1}(\Spec \cW_e^0) = \dim_\CC \Spec
\cW_e^0$.  Hence, $\HP^{DR}_i(\cW_e^0) \cong H^{\dim_\CC \Spec \cW_e^0 -
  i}(\rho^{-1}(\Spec \cW_e^0))$.  Next, observe that the contracting
$\CC^*$-action on $\Spec \cW_e^0$ lifts to a deformation retraction of
$\rho^{-1}(\Spec \cW_e^0)$ to $\rho^{-1}(e)$, as topological spaces
(in the complex topology).  Moreover, $\rho^{-1}(e)$ is compact,
and hence its dimension must equal the degree of the top cohomology,
$\dim_\RR \rho^{-1}(e) = \dim_\CC \rho^{-1}(\Spec \cW_e^0) = \dim_\CC
\Spec \cW_e^0$.  (This can also be computed directly: all of these
quantities are equal to the complex codimension of $G \cdot e$
inside $\mathcal{N}$.)  We conclude the first equality of the theorem
for $\eta = 0$, i.e., $\HP^{DR}_i(\cW_e^0) \cong H^{\dim_R
  \rho^{-1}(e)}(\rho^{-1}(e))$.

Since the parameter space of $\eta$ has a contracting $\CC^*$ action
with fixed point $\eta = 0$, to prove flatness of
$\HP^{DR}_i(\cW_e^\eta)$, it suffices to show that $\dim
\HP^{DR}_i(\cW_e^\eta) = \dim \HP^{DR}_i(\cW_e^0)$ for generic $\eta$.
For generic $\eta$, $\Spec \cW_e^\eta$ is smooth and symplectic, and
hence (by \cite[Example 2.2]{ESdm}), $M_{\Spec \cW_e^\eta} =
\Omega_{\Spec \cW_e^\eta}$, so that $\HP^{DR}_i(\cW_e^\eta) \cong
H^{\dim \Spec \cW_e^\eta - i}(\Spec W_e^\eta)$.  Moreover,
$\displaystyle \rho^{-1}(\Spec \cW_e^\eta) \mathop{\iso}^{\rho} \Spec
\cW_e^\eta$.  Next, for all $\eta$, the family $\rho^{-1}(\Spec
\cW_e^\eta)$ is topologically trivial \cite{Slflsgs} (see also
\cite{Slsssag}), and
hence its cohomology has constant dimension, and equals $\dim
H^{\dim_\CC \Spec \cW_e^0 - i}(\rho^{-1}(\Spec \cW_e^0))$.  Hence, for
generic $\eta$, $\dim \HP^{DR}_i(\cW_e^\eta) = \dim
\HP^{DR}_i(\cW_e^0)$, as desired.

Let us now prove the second statement of the theorem.  Let $\hbar$ be
a formal parameter. For any central character $\eta: Z \rightarrow \CC$, consider
the character $\eta/\hbar: Z((\hbar)) \rightarrow \CC((\hbar))$.  Let $\cW_e^{q,\eta/\hbar} :=  \cW_e^{q}((\hbar)) / \ker(\eta/\hbar)$.  As we will explain below, by results of Nest-Tsygan and Brylinski, for generic $\eta$, $\HH_i(\cW_e^{q, \eta/\hbar}) = \HP_i(\cW_e^{\eta})((\hbar)) =
H^{\dim_\CC \cW_e^\eta-i}(\Spec \cW_e^\eta,\Bbb C((\hbar)))$.  Hence,
$\HH_i(\cW_e^{q,\eta/\hbar}) \cong \HP_i^{DR}(\cW_e^0)((\hbar))$ for generic
$\eta$.  This implies the statement.\footnote{A similar argument is
  used in the proof of \cite[Theorem 1.8.(ii)]{EGsra}.}

In more detail, $\cW_e^{q,\eta/\hbar}$ is obtained from a deformation
quantization of $\cW_e^\eta$ in the following way.  Let
$\cW_e^{\hbar}$ be the $\hbar$-adically completed Rees algebra $\widehat{\bigoplus}_{m \geq 0} \hbar^m F^m
\cW_e^q$, where $F^\bullet \cW_e^q$ is the filtration on
$\cW_e^q$. This is a deformation quantization of $\cW_e$.  Consider
the quotient $\cW_e^{\hbar,\eta} := \cW_e^\hbar / (\ker(\eta))$.
Then, $\cW_e^{\hbar, \eta}$ is a deformation quantization of
$\cW_e^{\eta}$.  (Recall that, in general, a deformation quantization
of a Poisson algebra $A_0$ is an algebra of the form $A_\hbar =
(A_0[[\hbar]],\star)$, $A_0[[\hbar]] := \{\sum_{i \geq 0} a_i \hbar^i,
a_i \in A_0\}$ satisfying $a \star b = ab + O(\hbar)$ and $a \star b -
b \star a = \hbar \{a,b\} + O(\hbar^2)$, up to an isomorphism.) 
Then, by \cite[Theorems 2.2.1 and 3.1.1]{Br} and
\cite[Theorem A2.1]{NeTs}, if $\cW_e^\eta$ is smooth and symplectic,
then $\HH_i(\cW_e^{\hbar,\eta}[\hbar^{-1}]) \cong
\HP_i(\cW_e^{\eta})((\hbar)) \cong H^{\dim_\CC \cW_e^\eta-i}(\Spec
\cW_e^\eta,\Bbb C((\hbar)))$.  Furthermore, the map defined by $x
\mapsto \hbar x$, $x\in {\mathfrak g}$ defines an isomorphism
$\cW_e^{\hbar,\eta}[\hbar^{-1}] \iso \cW_e^{q,\eta/\hbar}$. Since
$\cW_e^\eta$ is smooth for generic $\eta$, this implies the results
claimed in the previous paragraph.

\section{Proof of Theorem \ref{hp0ccthm}}\label{hp0ccthmpfs}
Note that (iii) easily follows from (ii), since
$\HP_0(\cW_e)$ is a finitely generated $\Bbb Z_+$-graded 
$Z$-module, which is flat (i.e., projective) by Theorem
\ref{hpdrthm}.

Thus, it suffices to prove that $\dim \HH_0(\cW_e^{q,\eta}) = \dim
H^{\dim_\RR \rho^{-1}(e)}(\rho^{-1}(e))$ for all central characters
$\eta$.  As remarked before the statement of Theorem \ref{hp0ccthm},
there is a canonical surjection $\HP_0(\cW_e^0) \onto \gr
\HH_0(\cW_e^{q,\eta})$. Hence, for all $\eta$, $\dim
\HH_0(\cW_e^{q,\eta}) \leq \dim \HP_0(\cW_e^0)$, which equals $\dim
H^{\dim_\RR \rho^{-1}(e)}(\rho^{-1}(e))$ by Theorem \ref{hp0thm}
(which follows from Theorem \ref{hpdrthm}). The minimum value of $\dim
\HH_0(\cW_e^{q,\eta})$ is attained for generic $\eta$ (since
$\HH_0(\cW_e^q)$ is a finitely generated $Z$-module), where it is also
$\dim H^{\dim_\RR \rho^{-1}(e)}(\rho^{-1}(e))$ by Theorem
\ref{hpdrthm}. Hence, this dimension must be the same for all $\eta$.

\textbf{Acknowledgments.}  We are grateful to Roman Bezrukavnikov and
Ivan Losev for useful discussions. The first author's work was
partially supported by the NSF grant DMS-0504847. The second author is
a five-year fellow of the American Institute of Mathematics, and was
partially supported by the ARRA-funded NSF grant DMS-0900233.

\bibliographystyle{amsalpha}
\bibliography{master}
\end{document}